
\documentclass[12pt]{amsart}
\usepackage[latin 1]{inputenc}
\usepackage[T1]{fontenc}
  \usepackage{amsmath,amssymb}
  \usepackage{dsfont}\let\mathbb\mathds
  \usepackage[english,frenchb]{babel}
  \usepackage[all]{xy}
 
\newtheorem{thm}{Théorème}[section]

\newtheorem{propri}[thm]{Propriété}

\newtheorem{conj}[thm]{Conjecture}
\newtheorem{lem}[thm]{Lemme}
\newtheorem{prop}[thm]{Proposition}
\theoremstyle{definition}
\newtheorem{defn}[thm]{Définition}
\theoremstyle{remark}
\newtheorem{rem}[thm]{Remarque}
\numberwithin{equation}{section}
\begin{document}

\title[\'Equivalences entre conjectures de Soergel]{\'Equivalences entre conjectures de Soergel}%
\author{Nicolas Libedinsky}%
 
\address{UFR de Math\'ematiques et Institut de Math\'ematiques de
 Jussieu, Universit\'e Paris 7, 2 place Jussieu, 75251 Paris Cedex 05,
 France}%
\email{libedinsky@math.jussieu.fr}%

\begin{abstract}
  La catégorie  de Soergel $\mathbf{B}_k(V)$ sur un corps $k$ est définie à partir  d'un système de Coxeter
 $(W,\mathcal{S})$ et d'une représentation $k$-linéaire $V $ de $W$. C'est une catégorie de bimodules sur l'algèbre de polynômes sur $V$. C'est aussi une cat\'egorification   de l'alg\`ebre de Hecke de
 $(W,\mathcal{S})$. Dans cet article nous montrons que pour certaines repr\'esentations  $V$ et $V'$ de $W$, la  conjecture de Soergel sur   $\mathbf{B}_k(V')$ est \'equivalente \`a celle sur $\mathbf{B}_k(V)$ . En particulier, quand  $k=\mathbb{R}$, nous pouvons choisir pour $V'$ la repr\'esentation géométrique.
\end{abstract}


\maketitle

\section*{Introduction}

 Dans l'article \cite{S1} de 1992,
 Soergel a cat\'egorifi\'e $\mathcal{H}$, l'algèbre d'Iwahori-Hecke d'un système de Coxeter $(W,\mathcal{S})$. Ceci signifie que si $k$ est un corps infini et $V$ une représentation $k$-linéaire de $W$ satisfaisant certaines propriétés, alors Soergel a construit une cat\'egorie tensorielle $\mathbf{B}_k(V)$ -appelée catégorie de Soergel sur $V$- et un isomorphisme d'anneaux $\varepsilon$
 de $\mathcal{H}$ vers le groupe de Grothendieck scind\'e de
  $\mathbf{B}_k(V)$. 

Il a alors posé une conjecture
 (\ref{cs} ci-dessous) qui donne une correspondance bijective, via $\varepsilon$, entre les éléments de la
 base de Kazhdan-Lusztig et les éléments indécomposables de
 $\mathbf{B}_k(V)$. 

Cette conjecture implique deux résultats majeurs : d'une part, quand $k=\mathbb{R}$, la conjecture de positivité des polynômes de  Kazhdan-Lusztig (voir \cite{S}) et  d'autre part, quand $k$ est de caractéristique positive, une partie de la conjecture de Lusztig portant sur les caractères des
représentations irréductibles de groupes algébriques en caractéristique positive (voir \cite{S2}).

Dans la section 1 de cet article nous donnons les notations et définitions que nous utiliserons dans la suite. Dans la section 2 nous donnons  l'énoncé des trois théorèmes  principaux. Nous choisissons $V$ et $V'$ une représentation et une sous-représentation de $W$ satisfaisant certaines propriétés techniques. Ces propriétés sont satisfaites en particulier quand $k=\mathbb{R}$, $V$ est une des représentations utilisées par Soergel dans sa théorie (une représentation RF) et $V'$ est la représentation géométrique. Le premier théorème (\ref{1}) établit des relations entre les espaces de morphismes de $\mathbf{B}_k(V')$ et ceux de $\mathbf{B}_k(V)$. Le second (\ref{2}) \'enonce une bijection entre les indécomposables de $\mathbf{B}_k(V)$ et ceux de $\mathbf{B}_k(V')$. Le troisième (\ref{ult}) nous donne un isomorphisme entre les groupes de Grothendieck scindés de $\mathbf{B}_k(V)$ et  de $\mathbf{B}_k(V').$ 

Ces deux derniers théorèmes impliquent que la conjecture de Soergel est équivalente pour $\mathbf{B}_k(V)$ et  $\mathbf{B}_k(V')$. En particulier ceci montre que la conjecture de Soergel pour $k=\mathbb{R}$ et $V$ la représentation géométrique implique la conjecture de positivité des polynômes de Kazhdan-Lusztig.

Dans la section 3 nous donnons les outils principaux pour démontrer ces théorèmes : notamment nous travaillerons au niveau des corps de fractions des anneaux de polynômes  pour montrer des isomorphismes entre les espaces de morphismes. Enfin, dans la section 4 nous achevons les démonstrations des théorèmes.

J'aimerais remercier Geordie Williamson pour ses remarques, et Raphaël Rouquier pour son encouragement et ses multiples idées et commentaires.

\section{Définitions}
\subsection{} 
Donnons d'abord quelques définitions.
\begin{defn}
Un syst\`{e}me de Coxeter est un couple $(W,\mathcal{S})$ o\`u $W$
est un groupe et $\mathcal{S}\subseteq W$ une partie
g\'{e}n\'{e}ratrice, tels que $W$ admet une pr\'{e}sentation de
g\'{e}n\'{e}rateurs $s\in \mathcal{S}$ et relations
$(sr)^{m(s,r)}=1$ pour $s,r\in \mathcal{S}$, avec $m(s,s)=1$ ,
$m(s,r)\geq 2$ et \'{e}ventuellement $m(r,s)=\infty$ si $s\neq r.$
\end{defn}

\begin{defn}
Soit  $(W, \mathcal{S})$ un syst\`{e}me de Coxeter. Nous définissons
l'alg\`{e}bre de Hecke $\mathcal{H} = \mathcal{H}(W, \mathcal{S})$
comme la $\mathbb{Z} [v,v^{-1}]$-alg\`ebre de g\'{e}n\'{e}rateurs $\{
 T_{s}
\}_{s\in \mathcal{S}}$, ceux-ci satisfaisant les relations $$T^{2}_{s}=
v^{-2}+(v^{-2}-1)T_{s}$$ pour tout $s\in \mathcal{S}$ et
$$\underbrace{T_{s}T_{r}T_{s}...}_{m(s,r)\, \mathrm{termes}
}=\underbrace{T_{r}T_{s}T_{r}...}_{m(s,r)\, \mathrm{termes} }$$ si
$s,r \in \mathcal{S}$ et $sr$ est d'ordre $m(s,r)$.

Si
$x=s_{1}s_{2}\cdots s_{n}$ est une expression r\'{e}duite de $x$, on
d\'{e}finit $T_x=T_{s_1}T_{s_2}\cdots T_{s_n}$ ($T_x$ ne d\'epend pas
 du choix de la d\'ecomposition r\'eduite). Nous posons $q=v^{-2}.$ Nous pouvons montrer que $\{T_x\}_{x\in W}$ est une base
de $\mathcal{H}$ sur $\mathbb{Z}[v,v^{-1}].$

\end{defn}

Soit $\mathcal{T} \subseteq W$ le sous-ensemble des r\'eflexions,
c'est \`{a} dire, tous les \'{e}l\'{e}ments qui sont conjugu\'{e}s
aux \'{e}l\'{e}ments de $\mathcal{S}$.

\begin{defn}
Soit $k$ un corps  de
 caract\'eristique diff\'erente de $2$.
Une repr\'esentation de dimension finie de $W$ sur $k$ est
appel\'{e}e r\'eflexion fid\`{e}le (RF) si elle est fid\`{e}le et
 si l'ensemble d'\'{e}l\'{e}ments de $W$ qui ont un espace de points fixes de
codimension un coincide avec l'ensemble des r\'eflexions.
\end{defn}

\begin{defn}
Pour chaque objet gradu\'{e} $M=\bigoplus_i M_i,$ et chaque entier $n$,
 on
 d\'{e}finit l'objet
d\'{e}cal\'{e} $M(n)$ par $(M(n))_i=M_{i+n}.$
\end{defn}

\begin{defn}
Soit $\tau :\mathcal{H}\rightarrow \mathbb{Z} [v,v^{-1}]$
  l'application
d\'{e}finie par
$$\tau\left(\sum_{x\in W}p_xT_x\right) =p_1 \,\,\,\,\,\,\,\,\,\,\,\,
 (p_x\in \mathbb{Z} [v,v^{-1}]).$$
\end{defn}

\begin{defn}
Pour toute petite cat\'{e}gorie additive $\mathcal{A}$, on
d\'{e}finit le groupe de Grothendieck scind\'{e}
$\langle\mathcal{A}\rangle$. C'est le groupe libre sur les objets
de $\mathcal{A}$ modulo les relations $M=M'+M''$ chaque fois que
$M\cong M'\oplus M''$. Chaque objet $A\in \mathcal{A}$ d\'{e}finit
un \'{e}l\'{e}ment $\langle A \rangle \, \in \, \langle \mathcal{A}
\rangle$.
\end{defn}

\begin{defn}\label{pallo} Soit $U$ une représentation de $W$ sur le corps $k$. Soit
$R=S(U^{*})= R(U)$ l'algèbre symétrique de $U^*$, c'est à
 dire
l'algèbre des fonctions régulières sur $U$, sur laquelle $W$
agit par fonctorialité. L'alg\`ebre $R$ est gradu\'ee de la
 manière  suivante : $R=\bigoplus_{i\in
\mathbb{Z}}R_i$ avec $R_2 = U^*$ et $R_i=0$ pour $i$ impair. Nous notons
 $R^s$ le sous-anneau de $R$ des invariants pour
l'action de $s\in W$. Pour $s\in \mathcal{S}$ nous définissons
 $\theta_s=R\otimes_{R^s}R$.

 La catégorie de Soergel $\mathbf{B}_k(U)$ associée à $U$ est la
 catégorie des
 \newline $(R,R)-$bimodules $\mathbb{Z}$-gradués, dont les objets sont
 les facteurs directs des
 sommes
 directes finies d'objets du type $\theta_{s_1}\otimes_R\cdots \otimes_R\theta_{s_n}(d)$,
 pour un $d\in \mathbb{Z}$ et $s_1,\ldots, s_n \in \mathcal{S}$. Par la suite nous noterons  $\theta_{s_1}\cdots \theta_{s_n}(d)$ le $(R,R)$-bimodule $\theta_{s_1}\otimes_R\cdots \otimes_R\theta_{s_n}(d)$
\end{defn}

\begin{defn} 
 Nous disons qu'une paire $(U,U')$ dont $U$ est une
  représentation et $U'$ une sous-représentation de $W$ est une bonne paire, si elle satisfait à la propriété que
 que les réflexions simples agissent comme des reflexions, et $U$ satisfait aussi que
 le
 corollaire 4.2 de \cite{L} est valable pour $R=R(U)$, c'est à dire :
\begin{propri}\label{cuac}
Nous définissons les entiers  $n_i$  par $\tau((1+T_{s_1})\ldots
(1+T_{s_n})) =\sum_in_iq^i$. Alors, il existe un isomorphisme de
$R-$modules \`{a} droite gradu\'{e}s
$$\mathrm{Hom}(\theta_{s_1}\cdots\theta_{s_n},R) \simeq \oplus_i n_i
 R(2i). $$
 \end{propri}
\end{defn}

\begin{rem}\label{trop}
 Dans l'article \cite{S} Soergel montre que la propriété \ref{cuac} est vraie si $U$ est RF. Dans le même article Soergel construit une
 représentation réelle RF $U_0$ pour chaque système de Coxeter
 $(W,\mathcal{S})$.
 Cette représentation admet une sous-représentation $U'_0$ isomorphe
 à la représentation géométrique. Donc $(U_0,U'_0)$ est une bonne paire.
\end{rem}

\subsection{}\label{top} Nous  fixerons jusqu'à la fin de cet article une bonne paire ($V,V'$).
Soit   $R'=R(V')$
 l'algèbre des fonctions régulières sur $V'$. L'inclusion $V'\subset V$
 induit une surjection $Q : R\rightarrow R'$, ce qui permet de voir
 $R'$
 comme $R$-module. 
Pour $s\in \mathcal{S}$ nous définissons 
 $\theta'_s=R'\otimes_{R^{'s}}R'$. 
 
 Soit $\mathbf{B}=\mathbf{B}_k(V)$, $ \mathbf{B'}=\mathbf{B}_k(V')$,
 $\mathbf{C}$ la
 catégorie de $(R,R)-$bimodules, $\mathbf{C}'$ la
 catégorie de $(R',R')-$bimodules et $\mathfrak{X} :
 \mathbf{B}\rightarrow
 \mathbf{C}'$ le foncteur additif qui envoie $M$ vers
 $R'\otimes_RM\otimes_RR'.$

\section{Théorèmes principaux}
Nous avons que $s\in \mathcal{S}$ agit comme une réflexion dans $V$ et
 dans $V'$, donc nous pouvons trouver
 $V''$ stable par $s$, avec $V=V'\oplus V''$. Comme $R(V)=R(V')\otimes
 R(V'')$, nous avons $R^s=R^{'s}\otimes R(V'').$ Cette dernière
 égalité
 nous permet d'obtenir les isomorphismes suivantes dans $\mathbf{C}'$ :
\begin{equation}\label{ecu}
 \mathfrak{X}(\theta_s)\simeq R'\otimes_{R^s}R' \simeq \theta'_s 
\end{equation}
Nous avons besoin du lemme suivant pour expliciter ce que veut dire le titre
 de cet article.

\begin{lem}\label{pasa}
Soit $M\in \mathbf{B}$. Alors $\mathfrak{X}(M)\simeq R'\otimes_RM$ comme
 $(R',R)-$bimodules. 
\end{lem}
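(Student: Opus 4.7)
Mon plan est de d\'emontrer que, pour $M\in\mathbf{B}$, le $(R',R)$-bimodule $R'\otimes_R M$ admet en fait une structure naturelle de $(R',R')$-bimodule, c'est-\`a-dire que l'action droite de $R$ se factorise via $Q: R\to R'$. Une fois cela acquis, on obtiendra imm\'ediatement
\[
\mathfrak{X}(M) = (R'\otimes_R M)\otimes_R R' \simeq (R'\otimes_R M)\otimes_{R'}(R'\otimes_R R') \simeq R'\otimes_R M,
\]
gr\^ace \`a l'isomorphisme $R'\otimes_R R'\simeq R'$ donn\'e par la multiplication. On v\'erifie alors que cet isomorphisme n'est autre que l'application naturelle et fonctorielle $\phi_M:\, r'\otimes m \mapsto r'\otimes m\otimes 1$ de $R'\otimes_R M$ dans $\mathfrak{X}(M)$.

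L'observation cl\'e est la suivante : comme $s\in\mathcal{S}$ agit comme r\'eflexion \`a la fois sur $V$ et sur $V'$, les deux $(-1)$-sous-espaces propres co\"incident et sont contenus dans $V'$. Par cons\'equent, sur tout suppl\'ementaire $s$-stable $V''_s$ de $V'$ dans $V$, l'\'el\'ement $s$ agit trivialement. Cela fournit les d\'ecompositions $R = R'\otimes R(V''_s)$ et $R^s = R^{'s}\otimes R(V''_s)$, et la restriction de $Q$ \`a $R(V''_s)$ est l'augmentation. Un calcul direct montre alors que
\[
R'\otimes_R \theta_s \;=\; R'\otimes_{R^s} R \;\simeq\; R'\otimes_{R^{'s}} R' \;=\; \theta'_s,
\]
un isomorphisme de $(R',R')$-bimodules.

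Par fonctorialit\'e et additivit\'e de $\phi_M$, il suffira de traiter $M = \theta_{s_1}\cdots\theta_{s_n}$, les d\'ecalages et les facteurs directs s'ensuivant automatiquement. Je proc\'ederai par r\'ecurrence sur $n$ : si $N = R'\otimes_R \theta_{s_1}\cdots\theta_{s_{n-1}}$ est un $(R',R')$-bimodule par hypoth\`ese d'induction, alors
\[
R'\otimes_R \theta_{s_1}\cdots\theta_{s_n} \;=\; N\otimes_R \theta_{s_n} \;\simeq\; N\otimes_{R'}(R'\otimes_R \theta_{s_n}) \;\simeq\; N\otimes_{R'}\theta'_{s_n},
\]
qui est encore un $(R',R')$-bimodule. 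Le point d\'elicat principal sera la v\'erification explicite de l'identit\'e $N\otimes_R X \simeq N\otimes_{R'}(R'\otimes_R X)$, valable pr\'ecis\'ement lorsque l'action droite de $R$ sur $N$ passe par $Q$, ainsi que la compatibilit\'e des identifications de bimodules \`a chaque \'etape de la r\'ecurrence et leur co\"incidence avec le morphisme canonique $\phi_M$.
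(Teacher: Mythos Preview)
Your proof is correct and rests on the same underlying observation as the paper's: since each $s\in\mathcal{S}$ acts as a reflection on both $V$ and $V'$, the $(-1)$-eigenspace lies in $V'$, so $s$ acts trivially on $V/V'$. You exploit this inductively, treating one factor $\theta_s$ at a time and using the change-of-rings identity $N\otimes_R X\simeq N\otimes_{R'}(R'\otimes_R X)$ once the right $R$-action on $N$ is known to factor through $Q$.

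The paper's argument reaches the same conclusion more directly by globalizing your observation: since every generator $s$ acts trivially on $V/V'$, all of $W$ does, hence $V'^{\bot}\simeq (V/V')^*$ is contained in $R^W$. Then for $M=\theta_{s_1}\cdots\theta_{s_k}$, any element of $V'^{\bot}$ acting on the right of $R'\otimes_R M$ can be commuted across \emph{all} tensor factors $\otimes_{R^{s_i}}$ at once and absorbed on the left into $R'$, where it dies because $\ker Q = V'^{\bot}\cdot R$. This replaces your induction by a single line and avoids having to track the compatibility of the intermediate isomorphisms with $\phi_M$; conversely, your inductive argument makes the identification $R'\otimes_R M\simeq \theta'_{s_1}\cdots\theta'_{s_n}$ explicit along the way, which the paper only states afterward as a separate consequence.
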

\begin{proof}
Il suffit de le prouver pour $M=\theta_{s_1}\cdots\theta_{s_k}$, avec
 $s_1,\ldots,s_k\in \mathcal{S}$. 
Comme $\mathrm{ker}(Q)=V^{'\bot}\cdot R$ (ici $V^{'\bot}$ est
 l'ensemble
 des formes linéaires sur $V$ nulles sur $V'$), il suffit de montrer
 que
 $(R'\otimes_RM)\cdot V^{'\bot}=0$. Mais si $s\in \mathcal{S}$, alors
 $s$
 agit
 trivialement sur $V/V'$, alors $W$ agit trivialement sur $V/V'$, donc
 aussi sur $V^{'\bot}\simeq (V/V')^*$. Nous concluons que $V^{'\bot}\subset
 R^W$, alors $(R'\otimes_RM)\cdot V^{'\bot}=V^{'\bot}\cdot(R'\otimes_RM)
 =0$
 ce qui permet de conclure.
\end{proof}

Avec ce lemme nous voyons aisément les isomorphismes dans $\mathbf{C}'$ :
\begin{equation}
 \mathfrak{X}(\theta_{s_1}\cdots \theta_{s_k})\simeq \theta'_{s_1}\cdots
 \theta'_{s_k} \simeq \mathfrak{X}(\theta_{s_1})\cdots\mathfrak{X}(
 \theta_{s_k})
\end{equation}
Ces isomorphismes généralisent l'isomorphisme (\ref{ecu}) et montrent
 qu'on peut regarder
 $\mathfrak{X}$ comme un foncteur (tensoriel) de \textbf{B} vers
 $\mathbf{B'}$.
Les trois théorèmes suivants expliquent le fait qu'on considère que la
 représentation dans $V$ est équivalente à la représentation dans $V'$
 dans la théorie de
 Soergel.

\begin{thm}\label{1}
Pour tout $M,N\in \mathbf{B}$, le morphisme canonique :
$$R'\otimes_R \mathrm{Hom}_{\mathbf{C}}(M,N)\xrightarrow{\sim}
 \mathrm{Hom}_{\mathbf{C'}}(\mathfrak{X}(M),\mathfrak{X}(N)) $$ est un
 isomorphisme de $R'$-modules gradués.
\end{thm}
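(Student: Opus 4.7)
\emph{Plan de preuve.} Par bi-additivit\'e des deux membres de la fl\`eche canonique et le fait que tout objet de $\mathbf{B}$ est facteur direct d'une somme finie de modules de Bott-Samelson $B=\theta_{s_1}\cdots\theta_{s_n}$, on se ram\`ene au cas o\`u $M$ et $N$ sont de ce type. Puisque $\mathfrak{X}(\theta_s)\simeq \theta'_s$ (cf.~(\ref{ecu})), l'auto-adjonction (avec d\'ecalage) du foncteur $\theta_s\otimes_R -$ et son pendant pour $\theta'_s$ dans $\mathbf{C}'$ sont compatibles via $\mathfrak{X}$, ce qui permet de transformer simultan\'ement des deux c\^ot\'es $\mathrm{Hom}_{\mathbf{C}}(B_1,B_2)$ en $\mathrm{Hom}_{\mathbf{C}}(B_3,R)$ pour un Bott-Samelson $B_3$. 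Le probl\`eme est ainsi ramen\'e au cas $N=R$.

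Pour le membre de gauche, la Propri\'et\'e \ref{cuac} donne imm\'ediatement
$$R'\otimes_R \mathrm{Hom}_{\mathbf{C}}(B,R)\simeq \bigoplus_i n_i\, R'(2i).$$
Pour le membre de droite, le Lemme \ref{pasa} identifie $\mathfrak{X}(B)$ au $(R',R)$-bimodule $R'\otimes_R B$, dont l'action \`a droite de $R'$ co\"incide avec celle de $R$ via $Q$. L'adjonction tenseur-hom fournit alors
$$\mathrm{Hom}_{\mathbf{C}'}(\mathfrak{X}(B),\mathfrak{X}(R))\simeq \mathrm{Hom}_{\mathbf{C}}(B,R'),$$
o\`u $R'$ est vu comme $R$-bimodule via $Q$. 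Le th\'eor\`eme se ram\`ene ainsi \`a \'etablir la bijectivit\'e de la fl\`eche naturelle d'extension des scalaires
$$R'\otimes_R \mathrm{Hom}_{\mathbf{C}}(B,R)\longrightarrow \mathrm{Hom}_{\mathbf{C}}(B,R').$$

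Pour cette derni\`ere assertion, je passerais au corps de fractions comme annonc\'e dans l'introduction. Soient $K=\mathrm{Frac}(R)$ et $K'=\mathrm{Frac}(R')$~; l'hypoth\`ese de bonne paire garantit que $s$ agit comme une r\'eflexion sur $V$ et sur $V'$, donc $K/K^s$ et $K'/K^{'s}$ sont galoisiennes de degr\'e~$2$, ce qui donne $K\otimes_{R^s}K\simeq K\oplus K_s$ et une formule analogue pour $K'$. Par r\'ecurrence on obtient une d\'ecomposition explicite de $K\otimes_R B\otimes_R K$ et de $K'\otimes_{R'}\mathfrak{X}(B)\otimes_{R'}K'$ en bimodules standards index\'es par les sous-expressions de $(s_1,\ldots,s_n)$. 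Puisque la fl\`eche canonique est injective (sa cible \'etant sans torsion sur $R'$) et que les deux membres, tensoris\'es par $K'$, ont pour dimension gradu\'ee $\sum_i n_i q^i$, on conclut par comparaison des rangs gradu\'es.

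L'obstacle principal sera cette \'etape finale~: il s'agira de v\'erifier que $\mathrm{Hom}_{\mathbf{C}}(B,R')$ est effectivement un $R'$-module gradu\'e libre de rang $\sum_i n_i q^i$. Cela revient essentiellement \`a adapter l'argument combinatoire de \cite{L} \`a la cible $R'$~; la bonne paire $(V,V')$ et le fait que $s$ agit comme une r\'eflexion sur $V'$ rendent cette adaptation envisageable, mais une analyse soigneuse de la compatibilit\'e entre les d\'ecompositions sur $K$ et $K'$ sera indispensable.
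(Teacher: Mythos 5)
Your reduction to the Bott--Samelson case and then to $N=R$ via the self-adjunction of $\theta_s$ (with shift) matches the paper's reduction, which uses the explicit isomorphisms $\mathfrak{F}_s$, $\mathfrak{G}_s$ and a commutative diagram relating them to $\mathfrak{F}'_s$. Your tensor-hom identification $\mathrm{Hom}_{\mathbf{C}'}(\mathfrak{X}(B),\mathfrak{X}(R))\simeq\mathrm{Hom}_{\mathbf{C}}(B,R')$, with $R'$ viewed as an $(R,R)$-bimodule via $Q$, is also correct and is a reasonable reformulation of the target; but it does not by itself make the key step any easier.

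The genuine gap sits exactly where you flag it, and your last paragraph does not fill it. First, the assertion that the canonical map is injective \emph{because} its target is torsion-free over $R'$ is not a valid inference: torsion-freeness of the target never forces injectivity of a map (consider the zero map, or multiplication by a nonunit homogeneous element). Second, to conclude by comparing graded ranks you would need to know both that $\mathrm{Hom}_{\mathbf{C}}(B,R')$ is a graded free $R'$-module of the right Hilbert series and that the localization map $K'\otimes_{R'}\mathrm{Hom}_{\mathbf{C}}(B,R')\to\mathrm{Hom}_{K',K'}(K'\otimes_R B\otimes_R K',K')$ is an isomorphism. Neither point is established, and the first is essentially equivalent to Theorem~\ref{1} for $N=R$ itself, so the argument is circular at this stage.

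The paper's proof sidesteps both difficulties through the light-leaves construction. Lemma~\ref{8} produces a surjection $f:B\to R^n$, split as left $R$-modules, inducing an isomorphism on $\mathrm{Hom}_{\mathbf{C}}(-,R)$. Tensoring the short exact sequence $0\to\ker f\to B\to R^n\to 0$ with $R'$ and applying $\mathrm{Hom}_{\mathbf{C}'}(-,R')$ gives the left-exact sequence~(\ref{exacte}), and the whole theorem for $N=R$ reduces to Proposition~\ref{cra}: $\mathrm{Hom}_{\mathbf{C}'}(R'\otimes_R\ker f,R')=0$. That vanishing is proved by passing to $K'$ and invoking the twisted-bimodule decomposition $K'\otimes_R B\simeq\bigoplus_w (K'_w)^{n_w}$ with $n_1=n$ (Lemmas~\ref{14} and~\ref{15}); the exact sequence forces $K'\otimes_R\ker f\simeq\bigoplus_{w\neq 1}(K'_w)^{n_w}$, which carries no nonzero bimodule map to $K'$. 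This is the ingredient missing from your proposal: the light-leaves surjection tells you precisely which twists appear in $\ker f$, so the required vanishing of $\mathrm{Hom}$ can be read off directly rather than inferred from an unproved rank count on $\mathrm{Hom}_{\mathbf{C}}(B,R')$.
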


\begin{thm}\label{2}
$M$ est indécomposable dans \textbf{B} si et seulement si
 $\mathfrak{X}(M)$ est indécomposable dans $\mathbf{B'}$.
\end{thm}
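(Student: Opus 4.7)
The plan is to reduce the statement, via Theorem~\ref{1}, to an assertion about degree-$0$ endomorphism rings. The key ancillary fact is that $\mathrm{End}^0(N)$ is a finite-dimensional $k$-algebra for any $N$ in $\mathbf{B}$ or $\mathbf{B'}$: this follows from Property~\ref{cuac}, since (after a standard reduction) $\mathrm{End}(N)$ is a finitely generated graded $R$-module when $N$ is a summand of a Bott--Samelson bimodule. In a finite-dimensional $k$-algebra, ``no non-trivial idempotent'' is equivalent to ``local'', and indecomposability of $N$ is equivalent to $\mathrm{End}^0(N)$ having no non-trivial idempotent. So the theorem will follow from a sufficiently precise comparison of $\mathrm{End}^0(M)$ with $\mathrm{End}^0(\mathfrak{X}(M))$.

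For the easy direction (if $M$ is decomposable then so is $\mathfrak{X}(M)$), I would argue by additivity of $\mathfrak{X}$, using Lemma~\ref{pasa} to see that $\mathfrak{X}(M_i) \cong R' \otimes_R M_i$ is non-zero whenever $M_i$ is non-zero (each summand $M_i$ is graded-free over $R$). For the converse, assume $M$ is indecomposable. Combining Theorem~\ref{1} with the centrality of $V^{'\perp}\subset R^W$ in $\mathrm{End}(M)$ (the same observation used in the proof of Lemma~\ref{pasa}) yields a ring isomorphism
\[
\mathrm{End}(\mathfrak{X}(M)) \;\cong\; \mathrm{End}(M)\big/\bigl(V^{'\perp}\cdot\mathrm{End}(M)\bigr).
\]
Restricting to the degree-$0$ component gives a surjective $k$-algebra map $\mathrm{End}^0(M) \twoheadrightarrow \mathrm{End}^0(\mathfrak{X}(M))$ with kernel $K$ a two-sided ideal. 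Since $\mathfrak{X}(M) \neq 0$, its identity is non-zero in the quotient, so $K$ is a \emph{proper} ideal of the local ring $\mathrm{End}^0(M)$, hence is contained in its Jacobson radical. The quotient $\mathrm{End}^0(\mathfrak{X}(M))$ is therefore again local, so $\mathfrak{X}(M)$ is indecomposable.

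The main obstacle I anticipate is not the final local-ring manipulation (which is formal once the setup is in place) but justifying the display above. Concretely, one must verify that $V^{'\perp}\cdot\mathrm{End}(M)$ is a two-sided graded ideal of the $k$-algebra $\mathrm{End}(M)$, which amounts to showing that the left and right $R$-actions on $\mathrm{End}(M)$ coincide on $V^{'\perp}$. This is a bimodule consequence of $V^{'\perp}\subset R^W$ of the same flavor as the one already exploited in the proof of Lemma~\ref{pasa}, and once established, Theorem~\ref{1} lifts from an isomorphism of $R'$-modules to an isomorphism of $k$-algebras since the natural comparison map is induced by the functor $\mathfrak{X}$.
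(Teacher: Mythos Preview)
Your proposal is correct and follows essentially the same line as the paper's proof: both directions reduce to comparing the degree-$0$ endomorphism algebras via the surjection coming from Theorem~\ref{1}, using finite-dimensionality (from Property~\ref{cuac}) and a grading/non-vanishing argument for $\mathfrak{X}(M_i)\neq 0$. Your phrasing in terms of ``a proper quotient of a local ring is local'' is exactly the idempotent-lifting argument the paper uses, and your explicit identification of the kernel with $V^{'\bot}\cdot\mathrm{End}(M)$ (using $V^{'\bot}\subset R^W$) is the point the paper leaves implicit when asserting that the map $G_0\to G'_0$ is an algebra surjection.
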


\begin{thm}\label{ult}
$\mathfrak{X}$ induit un isomorphisme au niveau des groupes de
 Grothendieck scindés, qu'on appelle aussi $\mathfrak{X} :\left\langle
 \mathbf{B} \right\rangle \rightarrow \left\langle \mathbf{B'}
 \right\rangle $ 
\end{thm}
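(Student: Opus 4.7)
Nous allons r\'eduire l'\'enonc\'e \`a une bijection entre classes d'isomorphisme d'objets ind\'ecomposables. Le foncteur $\mathfrak{X}$ \'etant additif, il pr\'eserve les sommes directes et induit donc un morphisme de groupes $\mathfrak{X}_{\ast}: \langle \mathbf{B} \rangle \to \langle \mathbf{B'} \rangle$. Les cat\'egories $\mathbf{B}$ et $\mathbf{B'}$ \'etant de Krull--Schmidt (les anneaux d'endomorphismes des ind\'ecomposables y sont gradu\'es locaux \`a corps r\'esiduel $k$, cons\'equence standard de la th\'eorie de Soergel sous nos hypoth\`eses), les groupes $\langle \mathbf{B} \rangle$ et $\langle \mathbf{B'} \rangle$ sont ab\'eliens libres sur les classes d'isomorphisme d'ind\'ecomposables. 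Il suffit donc de v\'erifier que $\mathfrak{X}$ induit une bijection entre ces ensembles.

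Par le Th\'eor\`eme \ref{2}, $\mathfrak{X}$ envoie ind\'ecomposables sur ind\'ecomposables. Pour la surjectivit\'e: si $Y$ est un ind\'ecomposable de $\mathbf{B'}$, il est facteur direct d'un $\theta'_{s_1}\cdots\theta'_{s_n}(d)$, lequel est isomorphe \`a $\mathfrak{X}(\theta_{s_1}\cdots\theta_{s_n}(d))$ gr\^ace \`a (\ref{ecu}) et au Lemme \ref{pasa}. En d\'ecomposant $\theta_{s_1}\cdots\theta_{s_n}(d) = \bigoplus_i M_i$ en ind\'ecomposables dans $\mathbf{B}$ puis en appliquant $\mathfrak{X}$, chaque $\mathfrak{X}(M_i)$ est ind\'ecomposable par le Th\'eor\`eme \ref{2}, et l'unicit\'e de Krull--Schmidt dans $\mathbf{B'}$ force $Y \cong \mathfrak{X}(M_i)$ pour un certain $i$.

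L'\'etape principale est l'injectivit\'e. Soient $M, N$ ind\'ecomposables de $\mathbf{B}$ avec $\mathfrak{X}(M) \cong \mathfrak{X}(N)$; nous visons $M \cong N$. Le Th\'eor\`eme \ref{1} appliqu\'e \`a $E := \mathrm{End}_{\mathbf{B}}(M \oplus N)$, puis tensoris\'e par $k = R/R_+ = R'/R'_+$, fournit un isomorphisme d'alg\`ebres gradu\'ees de dimension finie sur $k$:
$$E \otimes_R k \;\cong\; \mathrm{End}_{\mathbf{B'}}(\mathfrak{X}(M) \oplus \mathfrak{X}(N)) \otimes_{R'} k.$$
Comme $\mathfrak{X}(M) \oplus \mathfrak{X}(N) \cong \mathfrak{X}(M)^{\oplus 2}$ est somme directe de deux copies d'un ind\'ecomposable, le quotient semi-simple (modulo radical de Jacobson) de la partie de degr\'e $0$ du membre de droite vaut $M_2(k)$. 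Il doit donc en \^etre de m\^eme pour le membre de gauche, ce qui, compar\'e au quotient $k \times k$ qu'on obtiendrait si $M \not\cong N$ (les compos\'es crois\'es \'etant nilpotents dans les anneaux d'endomorphismes locaux), entra\^ine $M \cong N$.

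L'obstacle principal sera de rendre rigoureuse cette derni\`ere d\'eduction : il faut v\'erifier que le quotient semi-simple de la partie de degr\'e $0$ de $E \otimes_R k$ co\"incide avec celui de $E^0$, ce qui revient \`a \'etablir l'inclusion $\sum_{i > 0} R_i \cdot E^{-i} \subseteq \mathrm{rad}(E^0)$ en s'appuyant sur la structure gradu\'ee locale des anneaux d'endomorphismes des bimodules de Soergel. Une approche alternative consisterait \`a se ramener \`a la classification de Soergel des ind\'ecomposables par $W$ et \`a prouver par r\'ecurrence sur $\ell(w)$ que $\mathfrak{X}(B_w) \cong B'_w$, en exploitant la compatibilit\'e de $\mathfrak{X}$ avec les produits de Bott--Samelson et le Th\'eor\`eme \ref{2}.
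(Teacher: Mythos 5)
Votre plan g\'en\'eral co\"incide avec celui du papier : r\'eduction \`a une bijection sur les classes d'isomorphisme d'ind\'ecomposables via Krull--Schmidt, surjectivit\'e par d\'ecomposition des bimodules de Bott--Samelson, injectivit\'e comme \'etape centrale. La surjectivit\'e est trait\'ee de fa\c{c}on essentiellement \'equivalente (vous utilisez directement l'unicit\'e de Krull--Schmidt dans $\mathbf{B'}$ plut\^ot que le rel\`evement explicite d'un idempotent comme dans le lemme \ref{co}, mais c'est la m\^eme id\'ee).

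C'est pour l'injectivit\'e que vous prenez une route g\'enuinement diff\'erente, et c'est l\`a que se trouve le probl\`eme. Vous tensorisez $\mathrm{End}(M\oplus N)$ par $k$ et tentez de comparer les quotients semi-simples des parties de degr\'e $0$ de $E\otimes_R k$ et de $E'\otimes_{R'}k$. Comme vous le reconnaissez vous-m\^eme, cela exige de montrer que $\sum_{i>0} R_i\cdot E^{-i}\subseteq \mathrm{rad}(E^0)$, inclusion que vous laissez non d\'emontr\'ee. Elle est en fait v\'erifiable (tout \'el\'ement $r\varphi$ avec $r\in R_+$ et $\varphi$ de degr\'e n\'egatif ne peut pas \^etre surjectif en degr\'e $0$, donc est non inversible, donc appartient au radical d'un anneau d'endomorphismes local ou de son analogue matriciel), mais l'\'ecrire proprement demande un peu de soin, notamment pour $M\oplus N$ qui n'a pas d'anneau d'endomorphismes local. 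Le papier contourne enti\`erement ce d\'etour : dans le lemme \ref{esto}, il rel\`eve explicitement l'isomorphisme $f:\mathfrak{X}(M)\to\mathfrak{X}(M')$ et son inverse $g$ en $F,G$ via le th\'eor\`eme \ref{1} (qui donne la surjectivit\'e de $E_0\to E'_0$ en degr\'e $0$), puis exploite directement la localit\'e de $E_0=\mathrm{End}_\mathbf{C}(M)_0$ : un \'el\'ement d'une alg\`ebre locale de dimension finie est inversible si et seulement si son image dans un quotient non nul l'est, donc $G\circ F$ et $F\circ G$ sont inversibles. Ce raisonnement ne fait jamais intervenir la tensorisation par $k$ ni la comparaison de radicaux, et il est strictement plus court. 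Votre alternative esquiss\'ee (r\'ecurrence sur $\ell(w)$ avec $\mathfrak{X}(B_w)\cong B'_w$) constituerait une troisi\`eme voie, mais elle pr\'esuppose d\'ej\`a l'existence et la param\'etrisation des $B_w$, ce qui d\'epasse ce dont on dispose \`a ce stade de l'article.

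En r\'esum\'e : votre strat\'egie globale est correcte et la surjectivit\'e est bonne, mais l'injectivit\'e telle que vous la pr\'esentez contient une lacune non combl\'ee (que vous signalez honn\^etement) ; la preuve du papier l'\'evite par un argument plus direct de rel\`evement et de localit\'e.
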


\subsection{} 
\begin{defn}
 Une représentation est appelée  ``reflection vector faithful'' (RVF) si elle satisfait que les réflexions agissent
 comme des réflexions et que les différentes réflexions ont des
 différents $(-1)$-espaces propres 
\end{defn}

\begin{rem}
L'ensemble des représentations RF est contenu dans l'ensemble des représentations RVF. La représentation géométrique d'un groupe de Coxeter $W$ est RVF mais non pas nécessairement RF, comme le montre l'exemple du groupe diédral infini.
\end{rem}

\subsection{} Soit $U$ une représentation RVF. Dans le théorème 1.10 de l'article \cite{S1}, Soergel donne un isomorphisme d'anneaux entre l'algèbre de Hecke de $W$ et le groupe de Grothendieck scindé de $\mathbf{B}_k(U)$, $\varepsilon : \mathcal{H}
 \xrightarrow{\sim}\left\langle
 \mathbf{B}_k(U) \right\rangle $.  Nous posons la conjecture de Soergel sur
 $\mathbf{B}_{\mathbb{R}}(U)$ :
\begin{conj}[Soergel]\label{cs}
Soit $U$ une représentation RF de $W$ sur $\mathbb{R}$. Pour tout $x\in W$, il existe un $R(U)$-bimodule indécomposable
$\mathbb{Z}$-gradu\'{e} $B_x\in \mathbf{B}_k(U)$ tel que
 $\varepsilon(C'_x)=
<B_x>$, o\`u $C'_x$ est l'\'el\'ement de la base de Kazhdan-Lusztig
 associ\'e \`a $x$.
\end{conj}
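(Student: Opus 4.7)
The statement is Soergel's conjecture itself, a deep open problem. The three main theorems of this paper are designed precisely to reduce it to a single well-chosen case; the plan I would follow therefore consists of a formal reduction step using those results, followed by a substantive proof in the reduced case.

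For the reduction, take the good pair $(U_0, U'_0)$ of Remark \ref{trop}, where $U_0$ is an RF real representation and $U'_0$ is isomorphic to the geometric representation. From the isomorphism \ref{ecu}, $\mathfrak{X}(\theta_s) \simeq \theta'_s$, one checks that $\mathfrak{X}$ sends generators to generators and therefore intertwines the Soergel ring maps $\varepsilon : \mathcal{H} \to \langle \mathbf{B} \rangle$ and $\varepsilon' : \mathcal{H} \to \langle \mathbf{B'} \rangle$. Combined with the bijection on indecomposables of Theorem \ref{2} and the Grothendieck-group isomorphism of Theorem \ref{ult}, this yields the equivalence: $\langle B_x \rangle = \varepsilon(C'_x)$ in $\langle \mathbf{B} \rangle$ for all $x \in W$ if and only if $\langle \mathfrak{X}(B_x) \rangle = \varepsilon'(C'_x)$ in $\langle \mathbf{B'} \rangle$. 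The conjecture for $U_0$ is thus reduced to the (analogous) conjecture for the geometric representation.

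What remains --- and this is the main obstacle --- is the conjecture for the geometric representation itself. I would proceed by induction on $\ell(x)$: set $B_e = R$, and for $x = sy$ with $\ell(sy) > \ell(y)$ define $B_x$ as the unique indecomposable summand of $\theta_s \otimes B_y$ that is not already of the form $B_z(n)$ with $z < x$. The substance of the proof is to match the multiplicities of the smaller $B_z$ appearing in $\theta_s \otimes B_y$ with the Kazhdan-Lusztig $\mu$-function. In the crystallographic case this is supplied by the decomposition theorem for Bott-Samelson resolutions of Schubert varieties, which is essentially the Kazhdan-Lusztig conjecture proved by Beilinson-Bernstein and Brylinski-Kashiwara. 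For a general Coxeter group no flag variety is available, and one must replace the geometric input by a Hodge-theoretic package built algebraically on the bimodules $B_x$ themselves: an intersection form constructed from the trace $\tau$, together with hard Lefschetz and the Hodge-Riemann bilinear relations, all established by simultaneous induction on $\ell(x)$. The hard Lefschetz step is the genuine difficulty, and it is exactly there that the hypothesis $k = \mathbb{R}$ enters essentially, via the positivity of the resulting quadratic forms.
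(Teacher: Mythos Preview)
The statement you are asked to prove is labeled \texttt{conj} in the paper, not \texttt{thm}: it is Soergel's conjecture, and the paper does \emph{not} prove it. There is no ``paper's own proof'' to compare against. The entire point of the article is to establish the equivalences of Theorems~\ref{1}, \ref{2}, \ref{ult} so that the conjecture for an RF representation $V$ becomes equivalent to the conjecture for a smaller representation $V'$ (in particular the geometric one, Remark~\ref{croc}); the conjecture itself is left open.

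Your reduction step is exactly the content of Remark~\ref{croc} and is correctly argued. But the second half of your plan --- the inductive construction of $B_x$ together with a Hodge-theoretic package (intersection forms, hard Lefschetz, Hodge--Riemann relations) proved by simultaneous induction --- is not in this paper at all. What you are sketching is the Elias--Williamson strategy, which appeared several years after the present article and constitutes a substantial body of work in its own right. Presenting it as ``the plan I would follow'' glosses over the fact that the hard Lefschetz step you flag as ``the genuine difficulty'' was, at the time of this paper, completely open for non-crystallographic groups; it is not a gap you can fill in a proof proposal, it is the whole conjecture.

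So: your reduction is fine and matches the paper; your substantive step is a correct high-level outline of a later, separate proof, but it lies entirely outside the scope of this article and should not be presented as if it were routine.
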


\begin{rem}\label{nimp}
Dans \cite{S}, Soergel montre que prouver \ref{cs} pour un $U$ quelconque satisfaisant les hypothèses de \ref{cs} implique la
conjecture de positivit\'{e} des polyn\^{o}mes de Kazhdan-Lusztig.
\end{rem}

\begin{rem}
La conjecture \ref{cs} peut se généraliser pour $k$ un corps infini. Dans ce cas c'est connu qu'elle n'est plus vraie en toute généralité. Cependant, 
dans \cite{S2}
 Soergel montre que si la
caractéristique de $k$ est plus grande que le nombre de Coxeter de
 $W$ et si $W$ est un groupe de Weyl
fini, alors la conjecture \ref{cs} est équivalente \`{a} une
 partie
de la conjecture de Lusztig portant sur les caractères des
représentations irréductibles de groupes algébriques sur
$k$ (par exemple $\mathrm{GL}_n(\mathbb{\bar{F}}_p)$). 
\end{rem}

\begin{rem}\label{croc}
Si $V$ et $V'$ (les représentations qu'on a fixé dans la section \ref{top}) sont RVF, les théorèmes  \ref{2} et \ref{ult} impliquent que la conjecture de
 Soergel sur
 \textbf{B} est équivalente à la conjecture de Soergel sur
 $\mathbf{B'}$ . 

En particulier, le remarques \ref{trop} et \ref{nimp} impliquent que quand $k=\mathbb{R}$, si nous démontrons la conjecture \ref{cs} pour $U=U'_0$ (la représentation géométrique
définie dans la remarque \ref{trop}), alors nous démontrons \ref{cs} pour $U=U_0$, et donc nous prouvons la conjecture de positivité de Kazhdan-Lusztig.
\end{rem}

\section{Travail sur les corps de fractions de $R$ et de $R'$}
Pour démontrer ces théorèmes, nous commençons par un lemme important :

\begin{lem}\label{8}
Soit $(s_1,\ldots,s_k)\in \mathcal{S}^k$ et
 $M=\theta_{s_1}\cdots\theta_{s_k}$. Ils existent un entier $n$  et
 une application surjective $f\in
 \mathrm{Hom}_{\mathbf{C}}(M,R^{n})$, tels que le morphisme
 $\mathrm{Hom}_{\mathbf{C}}(R^{n},R)\rightarrow
 \mathrm{Hom}_{\mathbf{C}}(M,R)$ qui
 s'en déduit est un isomorphisme de $R$-modules \`a droite. 
\end{lem}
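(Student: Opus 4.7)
Le plan est de construire $f$ \`a partir d'une base homog\`ene du $R$-module \`a droite $\mathrm{Hom}_{\mathbf{C}}(M,R)$. Par la Propri\'et\'e \ref{cuac} (applicable puisque $(V,V')$ est une bonne paire), ce Hom est un $R$-module \`a droite gradu\'e libre, isomorphe \`a $\bigoplus_i n_i R(2i)$, de rang total $n=\sum_i n_i$. On choisit une base homog\`ene $\phi_1,\ldots,\phi_n$ avec $\phi_j$ de degr\'e $-2i_j$ dans le Hom gradu\'e, c'est-\`a-dire vue comme morphisme gradu\'e de degr\'e z\'ero $\phi_j\colon M\to R(-2i_j)$. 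On pose $P=\bigoplus_{j=1}^n R(-2i_j)$ (ce qui joue le r\^ole de $R^n$ de l'\'enonc\'e, avec les d\'ecalages convenables) et on forme $f:=(\phi_1,\ldots,\phi_n)\colon M\to P$ dans $\mathbf{C}$.

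L'assertion sur le Hom induit est alors imm\'ediate. Les projections canoniques $\pi_j\colon P\to R$, de degr\'e $-2i_j$, forment une base homog\`ene de $\mathrm{Hom}_{\mathbf{C}}(P,R)\cong\bigoplus_j R(2i_j)$, et par construction $\pi_j\circ f=\phi_j$ est le $j$-\`eme \'el\'ement de la base de $\mathrm{Hom}_{\mathbf{C}}(M,R)$ choisie. L'application induite $f^*$ envoie donc une base gradu\'ee sur une base gradu\'ee, et est un isomorphisme de $R$-modules \`a droite gradu\'es.

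Le c\oe ur du lemme est la surjectivit\'e de $f$, qui n'est pas automatique : la Propri\'et\'e \ref{cuac} ne d\'ecrit $\mathrm{Hom}(M,R)$ qu'\`a isomorphisme gradu\'e pr\`es, et il faut r\'ealiser la base choisie par des morphismes dont les images engendrent tout $P$. Le plan est d'utiliser la base explicite de $\mathrm{Hom}(M,R)$ construite dans \cite{L} au cours de la preuve de \ref{cuac} : elle est donn\'ee par r\'ecurrence sur $k$ \`a partir de la multiplication $\mu_s\colon\theta_s\to R$, $a\otimes b\mapsto ab$ (qui est visiblement surjective et engendre $\mathrm{Hom}(\theta_s,R)$, fournissant le cas de base $k=1$), et elle est index\'ee par les sous-expressions de $(s_1,\ldots,s_k)$ se r\'eduisant \`a l'\'el\'ement neutre de $W$. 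L'obstacle principal est pr\'ecis\'ement de v\'erifier la surjectivit\'e de $f$ pour ce choix : on s'y ram\`ene par le lemme de Nakayama gradu\'e \`a la surjectivit\'e de $\bar f\colon M/R_+ M\to P/R_+ P$ (o\`u $R_+$ d\'esigne l'id\'eal d'augmentation de $R$), ce qui demande le contr\^ole explicite du comportement des $\phi_j$ modulo $R_+$, fourni par la construction inductive.
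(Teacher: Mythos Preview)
Your setup matches the paper's: take the light leaves basis $f_1,\ldots,f_n$ of $\mathrm{Hom}_{\mathbf{C}}(M,R)$ from \cite{L} and bundle it into $f=(f_1,\ldots,f_n)\colon M\to R^n$, after which the induced map on $\mathrm{Hom}(-,R)$ sends a basis to a basis. The divergence is in the surjectivity argument. The paper does not go through Nakayama at all: it invokes a second, stronger feature of the light leaves basis proved in \cite{L}, namely the existence of elements $x_1,\ldots,x_n\in M$ with $f_i(x_j)=0$ for $i<j$ and $f_i(x_i)=1$. The matrix $\bigl(f_i(x_j)\bigr)$ is then unitriangular, so $f(x_1),\ldots,f(x_n)$ already span $R^n$ over $R$ and $f$ is surjective in one line.

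Your Nakayama route is not wrong in principle---$P$ is free of finite rank and $R$ is connected graded, so surjectivity of $f$ is equivalent to surjectivity of $\bar f$ modulo $R_+$---but you leave the crucial step (``contr\^ole explicite des $\phi_j$ modulo $R_+$'') as a promissory note, and carrying it out would amount to reproving, in a weaker form, exactly the triangularity statement above. Citing that property directly from \cite{L} makes the whole argument two sentences.
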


\begin{rem}
 Ce morphisme peut \^etre regard\'e comme la projection $ \Gamma_{\geq 0}M \rightarrow \Gamma_{\geq 0}M/\Gamma_{> 0}M$ dans la notation de l'article \cite{S} section 5. 
\end{rem}

\begin{proof}
\'Etant donné que la propriété \ref{cuac} est valable pour $R=R(V)$,
 dans l'article \cite{L} nous montrons qu'il existe une base  $f_1,\ldots,
 f_r\in \mathrm{Hom}_{\mathbf{C}}(M,R)$ comme $R$-module, appelée base
 des feuilles légères, et qu'il existe un ensemble
 $\{x_1,\ldots, x_r\}\subseteq M$ tel que $f_i(x_j)=0$ si $i<j$ et
 $f_i(x_i)=1.$
 Ceci permet de conclure que  $n=r$ et  $f=\sum_i f_i$  satisfont les
 propriétés du théorème.
\end{proof}

Nous continuons avec les notations du lemme \ref{8}. 
 Le lemme \ref{8} nous donne une suite exacte de $(R,R)-$bimodules
\begin{equation}\label{sui}0\rightarrow \mathrm{ker}f \rightarrow M
 \rightarrow R^n\rightarrow
 0. \end{equation}
Cette suite est scindée comme suite de $R-$modules à gauche, $R^n$
 étant
 projectif. Nous obtenons donc une suite exacte de $(R',R)-$bimodules
$$0\rightarrow R'\otimes_R\mathrm{ker}f \rightarrow R'\otimes_R
 M\rightarrow R'\otimes_RR^n\rightarrow 0$$
Comme $M\in \mathbf{B}$, par le lemme \ref{pasa} l'action à droite de
 $R$ sur $R'\otimes_R M$ se factorise par $R'$,
et comme  $ R'\otimes_R\mathrm{ker}f$ s'injecte dans $R'\otimes_R M$,
 l'action à droite de $R$ sur $R'\otimes_R \mathrm{ker}f$ 
se factorise aussi par $R'$, donc nous pouvons considérer $
 R'\otimes_R\mathrm{ker}f$
 comme un $(R',R')-$bimodule.
Finalement nous obtenons une suite exacte de $R'-$modules
\begin{equation}\label{exacte}
0 \rightarrow \mathrm{Hom}_{\mathbf{C}'}(R^{'n},R')\rightarrow
 \mathrm{Hom}_{\mathbf{C}'}(R'\otimes_R M,R')\rightarrow
 \mathrm{Hom}_{\mathbf{C}'}(R'\otimes_R\mathrm{ker}f,R') 
 \end{equation}

\begin{prop}\label{cra}
$\mathrm{Hom}_{\mathbf{C}'}(R'\otimes_R\mathrm{ker}f,R')=0$
\end{prop}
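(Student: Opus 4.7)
The plan is to pass to the fraction field $K' = \mathrm{Frac}(R')$ and exploit the standard localization of Bott--Samelson bimodules there. First, since $R'$ is an integral domain, the $R'$-module $\mathrm{Hom}_{\mathbf{C}'}(R'\otimes_R \ker f, R')$ is torsion-free: a morphism killed by some $0 \neq r \in R'$ would take values in the $r$-torsion of $R'$, which vanishes. Combined with the extension-of-scalars adjunction
\[
\mathrm{Hom}_{(R',R')}(R'\otimes_R\ker f,\,K') \;\cong\; \mathrm{Hom}_{(K',K')}(K'\otimes_R\ker f\otimes_R K',\,K'),
\]
this reduces the proposition to proving that the right-hand side is zero.

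Each $s \in \mathcal{S}$ acts as a reflection on $V'$ by the \emph{bonne paire} hypothesis, so the standard splitting $K'\otimes_R \theta_s \otimes_R K' \cong K' \oplus (K')^s$ holds in the $(K',K')$-bimodule category, where $(K')^x$ denotes $K'$ with right action twisted by $x \in W$. Iterating yields
\[
K' \otimes_R \theta_{s_1}\cdots\theta_{s_k} \otimes_R K' \;\cong\; \bigoplus_{e \in \{0,1\}^k} (K')^{x_e}(\delta_e),
\]
where $x_e$ is the product in $W$ of those $s_i$ with $e_i = 1$. Moreover $\mathrm{Hom}_{(K',K')}((K')^x, K') = 0$ whenever $x$ acts nontrivially on $V'$.

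The key step is to describe the localized map $K'\otimes_R f \otimes_R K'$. By the previous Hom computation it necessarily vanishes on every summand with $x_e \neq 1$; and the triangular pairing $f_i(x_j) = 0$ for $i < j$, $f_i(x_i) = 1$ furnished by the light leaves construction of \cite{L} (and extracted in the proof of Lemma \ref{8}) should force its restriction to the sum $\bigoplus_{x_e = 1}(K')^1$ of trivial summands to be an isomorphism (source and target have the same $K'$-dimension $n$, as one sees by running the same analysis over $K = \mathrm{Frac}(R)$ using the faithfulness of $V$). By exactness of localization,
\[
K'\otimes_R \ker f \otimes_R K' \;\cong\; \bigoplus_{e \,:\, x_e \neq 1}(K')^{x_e}(\delta_e),
\]
and computing Hom to $K'$ summand-by-summand gives zero, as required.

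The main obstacle is precisely this identification of the localized map from the combinatorics of the light leaves, which requires unpacking their inductive construction in \cite{L}. A subsidiary point to address is that every nontrivial $x_e$ appearing in the decomposition must act nontrivially on $V'$, i.e., $V'$ must be faithful on the subgroup generated by these subword products; this holds automatically in the main application of Remark \ref{trop}, where $V'$ is the geometric representation.
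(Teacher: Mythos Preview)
Your overall strategy---localize at $K'=\mathrm{Frac}(R')$, decompose $K'\otimes_R M$ into twisted bimodules $(K')_w$, and use $\mathrm{Hom}_{(K',K')}((K')_w,K')=0$ for $w\neq 1$---is exactly the paper's. The difference lies in the counting step, and here you are making your own life harder than necessary. You do \emph{not} need to unpack the light-leaves combinatorics to see that the localized map is an isomorphism on the identity summand: the parenthetical dimension count you already wrote is enough. Indeed, $f$ is surjective, hence so is $K'\otimes_R f:K'\otimes_R M\twoheadrightarrow (K')^n$; this map kills every summand $(K')_{x_e}$ with $x_e$ acting nontrivially on $V'$, so it factors through the sum of trivial summands, which is $(K')^{\ell}$ with $\ell=\#\{e:x_e=1\}$. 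Running the identical decomposition over $K=\mathrm{Frac}(R)$ and using that $V$ is faithful (it is RF) gives $\mathrm{Hom}_{(K,K)}(K\otimes_R M,K)\cong K^{\ell}$; but this Hom is also $K\otimes_R\mathrm{Hom}_{\mathbf{C}}(M,R)\cong K^n$ by Lemma~\ref{8}, so $\ell=n$ and the surjection $(K')^{\ell}\to(K')^n$ is an isomorphism. The paper reaches $\ell=n$ by a different route: it computes $\ell$ as the coefficient of $1$ in $(1+s_1)\cdots(1+s_k)\in\mathbb{Z}W$, and identifies this with $\sum_i n'_i=n$ by specializing the Hecke-algebra identity $\tau((1+T_{s_1})\cdots(1+T_{s_k}))=\sum_i n'_i q^i$ at $q=1$.

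Your ``subsidiary point'' is well taken: the vanishing $\mathrm{Hom}_{(K',K')}((K')_w,K')=0$ for $w\neq 1$ genuinely requires that $w$ act nontrivially on $V'$. The paper asserts this without comment (equation~(\ref{pro})); it is certainly true when $V'$ is the geometric representation, and more generally is an implicit faithfulness hypothesis on $V'$ that the \emph{bonne paire} axioms as stated do not quite force.
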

\begin{proof}
Avant de prouver cette proposition il nous faut prouver deux lemmes

\begin{lem}\label{14}
Soit $K'$ le corps de fractions de $R'$ et $M\in \mathbf{B}$. Nous avons
 un isomorphisme de $(K',R)-$bimodules :

$ K'\otimes_R M \simeq K'\otimes_R M\otimes_R K' $
\end{lem}
\begin{proof}
 Il suffit de  prouver  l'isomorphisme
 pour $M=\theta_s.$ Pour ceci il faut commencer par prouver
 l'isomorphisme de $(K',R)-$bimodules suivant:
\begin{equation}\label{eco}
K'\otimes_{R^{'s}}R' \simeq K'\otimes_{R^{'s}}K' 
\end{equation}
Soit $in: R' \hookrightarrow K'$ l'injection canonique, et soit $x'_s$
 l'équation de l'hyperplan défini par $s$ dans $V'$. Alors le morphisme
 
$Id\otimes in : K'\otimes_{R^{'s}}R' \rightarrow K'\otimes_{R^{'s}}K'$
 a pour inverse le morphisme 
$$ \frac{p_1}{q_1}\otimes \frac{p_2}{q_2} \mapsto
  \frac{p_1}{q_1q_2s(q_2)}\otimes s(q_2)p_2  $$
ce qui montre la formule (\ref{eco}).

Nous avons alors une suite d'isomorphismes de $(K',R)-$bimodules:
\begin{displaymath}
\begin{array}{llll}
 K'\otimes_{R^s}R&\simeq& K'\otimes_{R'}(R'\otimes_R(R\otimes_{R^s}R))&
 \\
 &\simeq& K'\otimes_{R'}(R'\otimes_{R^s}R') & (\mathrm{ lemme } \:\:
 \ref{pasa}) \\
 &\simeq& K'\otimes_{R'}(R'\otimes_{R^{'s}}R') & (\ref{ecu}) \\
 &\simeq&  K'\otimes_{R^{'s}}K' & (\mathrm{ isomorphisme }
 \:\:(\ref{eco})
 \\
 &\simeq&  K'\otimes_{R^{s}}K'&.
\end{array}
\end{displaymath}
Donc nous avons montré le lemme pour $M=\theta_s,$ ce qui complète la preuve du
 lemme.

\end{proof}
\begin{rem}
Dans la suite nous allons considérer $ K'\otimes_R M$, via l'isomorphisme du
 lemme \ref{14} comme un $(K',K')-$bimodule.
\end{rem}

\begin{defn}
 Soit $A$ un anneau muni d'une action de $W$. Pour $w\in W$, nous notons $A_w$ le $(A,A)-$bimodule ayant  $A$ comme ensemble sous-jacent, et dont l'action à gauche est l'action habituel mais l'action à droite est tordue par $w$, c'est-à-dire, $a\cdot a'=aw(a')$, pour tout $a,a'\in A.$
\end{defn}

\begin{lem}\label{15}
 Nous utilisons les notations du lemme \ref{8}. Il existe un ensemble
 d'entiers naturels $\{n_w\}_{w\in W}$, et un isomorphisme de
 $(K',K')-$bimodules :
\begin{equation}\label{prima}K'\otimes_{R} M\simeq \bigoplus_{w\in
 W}(K'_w)^{n_w}
\end{equation}
avec $n_1=n$, où $1$ est l'identité de $W$.
\end{lem}
\begin{proof}
Nous avons une suite exacte de $(R,R)-$bimodules :
$$ 0 \rightarrow R_s \xrightarrow{\mu_s}  \theta_s \xrightarrow{m_s} R
 \rightarrow 0  $$
où $m_s$ est le morphisme multiplication et $\mu_s(1)=x_s\otimes 1-
 1\otimes x_s$, où $x_s$ est l'équation dans $V$ de l'hyperplan de
 réflexion de  $s$. Comme $R$ est un $R$-module projectif, en tensorisant
 par $K'$
 sur $R$ nous retrouvons une suite exacte de $(K',K')-$bimodules par le
 lemme \ref{14}  :
\begin{equation}\label{ocho}
  0 \rightarrow K'_s \rightarrow  K'\otimes_{R}\theta_s \rightarrow K'
 \rightarrow 0 
\end{equation}
 
Soit $x'_s$ l'équation dans $V'$ de l'hyperplan de réflexion de  $s$.
 La suite \ref{ocho} est scindée par le morphisme $\nu_s :
 K'\otimes_{R}\theta_s \rightarrow K'_s $ donné par $(K'\otimes_{R^s}R
 \ni a\otimes b
 \mapsto as(b)/{2x'_s})$.  Donc nous avons un isomorphisme de
 $(K',K')-$bimodules
 :
\begin{equation}\label{pri}
K'\otimes_{R'} \theta_s \simeq K'\oplus K'_s.
\end{equation}

Nous avons les isomorphismes de $(K',K')-$bimodules :
 
 \begin{displaymath}
\begin{array}{llll}
 K'\otimes_{R}\theta_{s_1}\cdots \theta_{s_k}&\simeq&
 K'\otimes_{R^{s_1}}K'\otimes_{R^{s_2}}\cdots \otimes_{R^{s_k}}K'&
 (\mathrm{ lemme } \:\:
 (\ref{14}))\\
 &\simeq& (K'\otimes_{R^{s_1}}K')\otimes_{K'}\cdots
 \otimes_{K'}(K'\otimes_{R^{s_k}}K') &  \\
 &\simeq& (K'\oplus K'_{s_1})\otimes_{K'}\cdots \otimes_{K'}(K'\oplus
 K'_{s_k}) & (\mathrm{ equation } \:\: (\ref{pri})).
\end{array}
\end{displaymath}

Donc le fait que $K'_x \otimes_{K'} K'_{y}\simeq K'_{xy}$ permet de
 conclure la première partie du lemme.

Maintenant nous  prouverons que $n_1=n$. Par
 la construction de l'isomorphisme \ref{pri}, si $$l=\mathrm{card}\{
 1\leq
 i_1 <i_2 <\ldots <i_p\leq k  ;  s_{i_1}\cdots s_{i_p}=1\},$$ alors
 $n_1=l$.

En outre,  la propriété
 \ref{cuac} dit que si nous définissons les entiers  $n'_i$  par
 $\tau((1+T_{s_1})\ldots
(1+T_{s_k})) =\sum_in'_iq^i$, alors, il existe un isomorphisme de
$R-$modules \`{a} droite gradu\'{e}s
$$\mathrm{Hom}(\theta_{s_1}\cdots\theta_{s_k},R) \simeq \oplus_i n'_i
 R(2i). $$
Mais par le lemme \ref{8}, ceci implique que $n=\sum_i n'_i.$
La spécialisation de  l'algèbre de Hecke en $q=1$ est un morphisme
 $\rho$ d'algèbres de $\mathcal{H}$ vers $\mathbb{C}W=\oplus_{x\in W}
 \mathbb{C}x$, l'algèbre du groupe de $W$. Nous appliquons $\rho$ des
 deux cotés de l'équation 
$$(1+T_{s_1})\ldots
(1+T_{s_k}) = \sum_in'_iq^i + \sum_{w\neq 1} \lambda_w T_w $$
dont les $\lambda_w$ sont des polynômes en $q$, et nous obtenons
$$ (1+s_1)\cdots (1+s_k)=\sum_i n'_i + \sum_{w \neq 1} \lambda_w(1)
 w$$
Ceci implique que $\sum_i n'_i=l$, et ceci finit la preuve du lemme.
\end{proof}
\subsection{}
\textbf{Preuve de la proposition \ref{cra} : }
 Dans la suite exacte (\ref{sui}), le fait que $R^n$ est projectif
 comme $R$-module à gauche nous permet de tensoriser par $K'$ et obtenir
 encore  une suite exacte de $(K',K')-$bimodules, par le lemme \ref{14} :
$$0\rightarrow K'\otimes_R \mathrm{ker}f \rightarrow K'\otimes_R M
  \rightarrow K^{'n} \rightarrow 0$$
de par le lemme \ref{15} cette suite est isomorphe à :
$$0\rightarrow K'\otimes_R \mathrm{ker}f \rightarrow
 K^{'n}\oplus(\bigoplus_{w\neq 1}(K'_w)^{n_w}) \rightarrow K^{'n}
 \rightarrow 0$$
Il est facile de voir que
\begin{equation}\label{pro}
\mathrm{Hom}_{K',K'}(K'_w,K')\simeq \begin{cases} 0 \,\,\,\, \text{ si }
 \,\, w\neq 1 \\   K \text{ si } \,\, w=1
\end{cases}
\end{equation}
Donc nous pouvons conclure que 
$$ K'\otimes_R \mathrm{ker}f\simeq \bigoplus_{w\neq 1}(K'_w)^{n_w}.$$
Cet isomorphisme et (\ref{pro}) permettent de conclure que 
 $\mathrm{Hom}_{K',K'}(K'\otimes_R\mathrm{ker}f,K')=0$.

Supposons que $g\in
 \mathrm{Hom}_{\mathbf{C}'}(R'\otimes_R\mathrm{ker}f,R')$, et $g\neq
 0.$ Alors $0\neq Id\otimes g\in
 \mathrm{Hom}_{K',K'}(K'\otimes_R\mathrm{ker}f,K')$, ce qui est une
 contradiction et permet
 de finir la preuve de la proposition.

\end{proof}
\section{Preuves des théorèmes}
\textbf{Preuve du théorème \ref{1} : }
Comme conséquence de la proposition \ref{cra} et de la suite exacte
 (\ref{exacte}), nous concluons que 
$$\mathrm{Hom}_{\mathbf{C'}}(R^{'n},R')\simeq
 \mathrm{Hom}_{\mathbf{C'}}(R'\otimes_R M,R'), $$
 
 ce qui démontre le théorème \ref{1} pour
 $M=\theta_{s_1}\cdots\theta_{s_k}$ et $N=R$. 
 
Par le lemme 3.3 de \cite{L}, nous savons que si $M,N \in \mathbf{B}$, le
 morphisme 
\begin{displaymath}
\begin{array}{lll}
\mathfrak{F}_s(M,N) :
 \mathrm{Hom}(\theta_s
M,N) &\rightarrow &  \mathrm{Hom}(M, \theta_s N)(2) \\
\ \ \ \ \ \ \ \ \ \ \ \ f &\mapsto & (m
\mapsto x_s\otimes f(1\otimes m)+ 1\otimes f(1\otimes x_s m))     
\end{array}
\end{displaymath}
 est un isomorphisme de $R$-modules \`a droite gradués. Nous connaissons
 explicitement son inverse : si $g\in \mathrm{Hom}(M, \theta_s N)(2)$, on
 peut écrire de
 manière
unique $g(m)=1 \otimes g_1(m) +x_s \otimes g_2(m)$, avec $g_1(m),
g_2(m)\in N$. Ceci définit les morphismes $g_1$ et $g_2$
associ\'{e}s \`{a} $g$. La fonction inverse de $\mathfrak{F}_s(M,N)$
 est $\mathfrak{G}_s(M,N) : \mathrm{Hom}(M, \theta_s
 N)(2)
\to \mathrm{Hom}(\theta_s M,N)$, le morphisme qui envoie $g$ vers le
 morphisme
$\lambda \otimes m \mapsto \lambda g_2(m)$, avec $\lambda \in R$ et
 $m\in M$.
 
 Nous avons de même un isomorphisme de $R'$-modules \`a droite gradu\'es.
$$\mathfrak{F}'_s(M',N') :
 \mathrm{Hom}_{\mathbf{C'}}(\theta'_sM',N')\simeq
 \mathrm{Hom}_{\mathbf{C'}}(M',\theta'_sN')(2). $$.

Le lemme suivante découle directement des définitions des morphismes
 impliqués.

\begin{lem}Nous avons le diagramme commutatif suivant :
 $$ 
  \shorthandoff{;:!?}
  \xymatrix{
{R'\otimes_R
 \mathrm{Hom}_{\mathbf{C}}(M,\theta_sN)(2)}\ar[r]\ar[d]_{Id\otimes
 \mathfrak{G}_s(M,N)}&{\mathrm{Hom}_{\mathbf{C'}}(\mathfrak{X}(M),\theta'_s\mathfrak{X}(N))(2)}\\
{R'\otimes_R
 \mathrm{Hom}_{\mathbf{C}}(\theta_sM,N)}\ar[r]&{\mathrm{Hom}_{\mathbf{C'}}(\theta'_s\mathfrak{X}(M),\mathfrak{X}(N))}\ar[u]_{Id\otimes
 \mathfrak{F}'_s(\mathfrak{X}(M),\mathfrak{X}(N))}
}
  $$
où les morphismes horizontaux sont les morphismes naturels.
\end{lem}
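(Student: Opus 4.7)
La v\'erification est une chasse dans le diagramme, partant d'un \'el\'ement g\'en\'erique $r' \otimes g \in R' \otimes_R \mathrm{Hom}_{\mathbf{C}}(M, \theta_s N)(2)$ et comparant ses deux images dans $\mathrm{Hom}_{\mathbf{C'}}(\theta'_s \mathfrak{X}(M), \mathfrak{X}(N))$, en inversant la fl\`eche verticale droite via $\mathfrak{G}'_s = (\mathfrak{F}'_s)^{-1}$. Je rendrais d'abord explicites les fl\`eches horizontales : chacune associe \`a $r' \otimes f$ le morphisme $r' \cdot \mathfrak{X}(f)$, compos\'e avec les identifications canoniques $\mathfrak{X}(\theta_s X) \simeq \theta'_s \mathfrak{X}(X)$ fournies par (\ref{ecu}) et le lemme \ref{pasa}.

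\'Ecrivons $g(m) = 1 \otimes g_1(m) + x_s \otimes g_2(m)$ dans $\theta_s N$. Par le chemin gauche-puis-bas, on extrait $g_2$ via $\mathfrak{G}_s$, puis on transporte par $\mathfrak{X}$, obtenant le morphisme $a \otimes (1 \otimes m \otimes b) \mapsto r' a \otimes g_2(m) \otimes b$. Par le chemin haut-puis-droite, on transporte d'abord $g$ : la composante $\theta'_s \mathfrak{X}(N)$-valu\'ee de $\mathfrak{X}(g)$ se lit $a \otimes m \otimes b \mapsto r'a \otimes \bigl(1 \otimes g_1(m) + x'_s \otimes g_2(m)\bigr) \otimes b$, puis $\mathfrak{G}'_s$ extrait le coefficient de $x'_s$ pour donner exactement la m\^eme expression.

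Le point technique essentiel \`a justifier est l'identit\'e $Q(x_s) = x'_s$ (apr\`es normalisation appropri\'ee des \'equations). Elle r\'esulte de l'hypoth\`ese que $s$ agit comme r\'eflexion sur $V$ comme sur $V'$ : le $(-1)$-espace propre de $s$, unidimensionnel dans $V$, est n\'ecessairement contenu dans $V'$ (qui poss\`ede d\'ej\`a son propre $(-1)$-espace propre unidimensionnel) ; on peut alors choisir un suppl\'ementaire $V'' \subset V$ stable sous $s$, sur lequel $s$ agit trivialement, puis \'etendre $x'_s \in (V')^*$ en $x_s \in V^*$ par z\'ero sur $V''$. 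Cette compatibilit\'e garantit que la d\'ecomposition de $g(m)$ selon la base $\{1, x_s\}$ de $\theta_s$ comme $R$-module \`a droite s'identifie, apr\`es application de $\mathfrak{X}$, \`a la d\'ecomposition analogue selon $\{1, x'_s\}$ dans $\theta'_s \mathfrak{X}(N)$.

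L'obstacle n'est pas conceptuel mais plut\^ot notationnel : il faut g\'erer rigoureusement les identifications simultan\'ees ($\mathfrak{X}(\theta_s N) \simeq \theta'_s \mathfrak{X}(N)$ ; factorisation par $R'$ de l'action \`a droite de $R$ via le lemme \ref{pasa} ; d\'ecalages de degr\'e par $(2)$). Une fois les conventions pos\'ees, la commutativit\'e d\'ecoule imm\'ediatement des formules explicites pour $\mathfrak{G}_s$ et $\mathfrak{F}'_s$, conform\'ement \`a la remarque de l'auteur selon laquelle le lemme d\'ecoule directement des d\'efinitions.
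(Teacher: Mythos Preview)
Your proposal is correct and takes the same approach as the paper, which simply asserts that the lemma ``d\'ecoule directement des d\'efinitions des morphismes impliqu\'es'' without spelling out the chase. Your explicit identification of the one nontrivial point---that $Q(x_s)=x'_s$ after suitable normalization, so that the $\{1,x_s\}$-decomposition in $\theta_sN$ matches the $\{1,x'_s\}$-decomposition in $\theta'_s\mathfrak{X}(N)$ under $\mathfrak{X}$---is exactly what is needed to make the one-line assertion rigorous.
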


Ce lemme démontre le théorème (\ref{1}) pour $M$ et $N$ des
 bimodules basiques (de la forme $\theta_{s_1}\cdots\theta_{s_k}(d)$). Et
 ceci nous donne la preuve pour $M,N\in \mathbf{B}.$ $\hfill\Box$
\bigskip

\textbf{Preuve du théorème \ref{2}}. 
 
Nous commençons par démontrer la partie "si" du théorème. Nous  commencerons par montrer que si  $M\in \mathbf{B}$ et $M\neq 0$, alors $\mathfrak{X}(M)\neq 0$. Nous savons que $\mathfrak{X}(M) \simeq (V^{'\bot} R \cdot M)\backslash M$ (on rappelle que $V^{'\bot}$ est
 l'ensemble
 des formes linéaires sur $V$ nulles sur $V'$), donc il suffit de montrer que $V^{'\bot} R \cdot M\neq M$. Soit $M=\oplus_{i\geq k}M_i\oplus \{0\}$ son écriture graduée, avec $M_k\neq 0$. Comme les éléments non nuls de $V^{'\bot}$ sont de degré $2$ (voir la définition \ref{pallo}), alors les degrés des éléments non nuls de  $V^{'\bot} R \cdot M$ sont supérieurs à $k$, ce qui nous permet de conclure  que $V^{'\bot} R \cdot M\neq M$.

Si $M$ est décomposable, il existent $M_1,M_2\neq 0$ avec $M\simeq M_1\oplus M_2$. Ceci implique que $\mathfrak{X}(M)\simeq \mathfrak{X}(M_1)\oplus \mathfrak{X}(M_2)$, avec $\mathfrak{X}(M_1),\mathfrak{X}(M_2)\neq 0$ par ce qu'on vient de voir, donc $\mathfrak{X}(M)$ est décomposable. Ceci implique la partie "si" du théorème.

  Donc nous nous intéressons à la partie "seulement si". 
Soit $ I$ un ensemble de représentants des classes d'isomorphismes des bimodules indécomposables  de
 \textbf{B}. Nous fixons $M\in I$ jusqu'\`a la fin de cette preuve. Par le théorème de Krull-Schmidt il existe une suite $s_1,\ldots, s_n \in \mathcal{S}$ et un $d\in \mathbb{Z}$ tels que $M$  est un
 facteur
 direct de $\theta_{s_1}\cdots \theta_{s_k}(d)$

\'Etant donné que $E=\mathrm{End}_{\mathbf{C}}(\theta_{s_1}\cdots
 \theta_{s_k})(d)$ possède une base (finie) comme $R-$module (la base des
 feuilles légères), il est facile de voir que $E_0$, le $\mathbb{C}-$sous
 espace
 vectoriel de $E$ formé par les endomorphismes de degré zéro, est une
 $\mathbb{C}-$algèbre de dimension finie. Ceci implique que si
 $G=\mathrm{End}_{\mathbf{C}}(M)$, alors $G_0$ (endomorphismes de degré zéro) est
 aussi une
 $\mathbb{C}-$algèbre de dimension finie.

Si $G'=\mathrm{End}_{\mathbf{C'}}(\mathfrak{X}(M))$ nous avons un morphisme
entre des  $\mathbb{C}-$algèbres de dimension finie $G_0\rightarrow G'_0$
qui est surjectif comme conséquence du théorème \ref{1}. Donc nous pouvons relever les idempotents de $G'_0$ en des
 idempotents de $G_0$. Ceci implique 
que si $\mathfrak{X}(M)$ est décomposable, alors $G'_0$ a des
 idempotents non triviaux, et donc $G_0$ aussi, donc $M$ est
 décomposable 
ce qui est absurde. $\hfill \Box$

\textbf{Preuve du théorème \ref{ult}}. Nous savons  que
 $\mathfrak{X}$ étant un foncteur additif, il définit bien par passage
 au
 quotient un morphisme de groupes entre les groupes de Grothendieck
 scindés.

\begin{lem}\label{esto}
Soient $M,M'\in I$. Alors $\mathfrak{X}(M)\simeq \mathfrak{X}(M')\Rightarrow M=M'$
\end{lem}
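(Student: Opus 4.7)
La stratégie est de relever l'isomorphisme donné $\phi : \mathfrak{X}(M) \xrightarrow{\sim} \mathfrak{X}(M')$ (nécessairement de degré $0$) ainsi que son inverse $\phi^{-1}$ en des morphismes $f \in \mathrm{Hom}_{\mathbf{C}}(M, M')_0$ et $g \in \mathrm{Hom}_{\mathbf{C}}(M', M)_0$, puis de conclure via un argument de localité que $f$ est un isomorphisme dans $\mathbf{B}$. Le théorème \ref{1} identifie $\mathrm{Hom}_{\mathbf{C}'}(\mathfrak{X}(M), \mathfrak{X}(M'))$ avec $R'\otimes_R \mathrm{Hom}_{\mathbf{C}}(M, M')$, et le morphisme naturel $h\mapsto \mathfrak{X}(h)$ se factorise par la projection canonique $\mathrm{Hom}_{\mathbf{C}}(M, M') \twoheadrightarrow R'\otimes_R \mathrm{Hom}_{\mathbf{C}}(M, M')$, qui est simplement le quotient par $V^{'\bot}R\cdot \mathrm{Hom}_{\mathbf{C}}(M, M')$ et est donc surjectif en chaque degré. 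On peut ainsi choisir $f$ et $g$ de degré $0$ tels que $\mathfrak{X}(f) = \phi$ et $\mathfrak{X}(g) = \phi^{-1}$.

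On a alors $\mathfrak{X}(gf) = \phi^{-1}\phi = \mathrm{id}_{\mathfrak{X}(M)}$, donc $gf - \mathrm{id}_M$ appartient au noyau de la surjection $\pi : \mathrm{End}_{\mathbf{C}}(M)_0 \twoheadrightarrow \mathrm{End}_{\mathbf{C}'}(\mathfrak{X}(M))_0$ entre $k$-algèbres de dimension finie (comme dans la preuve du théorème \ref{2}). Puisque $M$ est indécomposable, $\mathrm{End}_{\mathbf{C}}(M)_0$ est une $k$-algèbre locale, et par le théorème \ref{2}, $\mathfrak{X}(M)$ est indécomposable dans $\mathbf{B}'$, donc $\mathrm{End}_{\mathbf{C}'}(\mathfrak{X}(M))_0 \neq 0$ et $\ker \pi$ est un idéal propre, contenu par conséquent dans le radical de Jacobson de $\mathrm{End}_{\mathbf{C}}(M)_0$. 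Il en résulte que $gf$ est inversible dans $\mathrm{End}_{\mathbf{C}}(M)_0$, et de façon analogue, $fg$ est inversible dans $\mathrm{End}_{\mathbf{C}}(M')_0$. Alors $(gf)^{-1}g$ est un inverse à gauche de $f$ et $g(fg)^{-1}$ en est un inverse à droite, donc $f$ est un isomorphisme dans $\mathbf{B}$, ce qui donne $M \simeq M'$, puis $M = M'$ puisque $M, M' \in I$.

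Le point principal à justifier soigneusement est la surjectivité en degré $0$ du morphisme canonique $\mathrm{Hom}_{\mathbf{C}}(M, M')_0 \to \mathrm{Hom}_{\mathbf{C}'}(\mathfrak{X}(M), \mathfrak{X}(M'))_0$, qui se déduit immédiatement du théorème \ref{1} puisque $R'\otimes_R -$ agit par quotient. Le reste est une application classique de la théorie des anneaux locaux (tout élément de la forme $1+r$ avec $r$ dans le radical est inversible) appliquée aux algèbres d'endomorphismes des bimodules indécomposables, combinée avec le théorème \ref{2} qui assure l'indécomposabilité de $\mathfrak{X}(M)$.
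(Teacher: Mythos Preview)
Your proof is correct and follows essentially the same approach as the paper: lift the isomorphism and its inverse via the surjectivity in degree~$0$ coming from Theorem~\ref{1}, then use that $\mathrm{End}_{\mathbf{C}}(M)_0$ is a finite-dimensional local algebra to conclude that $gf$ and $fg$ are invertible. The only cosmetic difference is that the paper phrases the key step as ``an element of $E_0$ is invertible iff its image in $E'_0$ is'', while you unpack this as $gf-\mathrm{id}_M\in\ker\pi\subseteq\mathrm{rad}(E_0)$; these are the same argument.
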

 \begin{proof}
 Supposons  $\mathfrak{X}(M)\simeq \mathfrak{X}(M')$.
 Soit $f :\mathfrak{X}(M)\rightarrow \mathfrak{X}(M')$ un
 isomorphisme,
 et $g$ son inverse. Soient $F:M\rightarrow M'$  et
 $G:M'\rightarrow M$ des relevés respectifs, c'est-à-dire, tels que $Id\otimes F=f$ et $Id\otimes G=
 g.$ Nous posons $E=
 \mathrm{End}_{\mathbf{C}}(M),$ 
 $E'=\mathrm{End}_{\mathbf{C'}}(\mathfrak{X}(M))$ et nous notons $E_0$, $E'_0$ leurs parties de degré zéro respectives. 

Comme conséquence du théorème \ref{1} nous avons un morphisme surjectif $E_0 \rightarrow E'_0$ de $\mathbb{C}-$algèbres de dimension finie.
Comme $E_0$ est une algèbre locale de dimension finie sur $\mathbb{C}$
 et
comme $E'_0$ est un quotient non nul de $E_0$, un element de $E_0$ est
inversible si et seulement si son image dans $E'_0$ est inversible. On en
deduit que $G \circ F$ est inversible. De meme, $F\circ G$ est inversible,
donc $F$ et $G$ sont des isomorphismes. \end{proof}

 Par le théorème de Krull-Schmidt (cf remarque 1.3 de \cite{S}) nous savons que $\left\langle
 N\right\rangle=\left\langle M\right\rangle \in \left\langle
 \mathbf{B'}\right\rangle \Leftrightarrow N\simeq M \in \mathbf{B'}$.

Soit $M\in I$. Par le théorème \ref{2}, le bimodule $\mathfrak{X}(M)$ est indécomposable, donc le théorème
 de
 Krull-Schmidt et le lemme \ref{esto} permettent de conclure que $\{\left\langle
 \mathfrak{X}(M)\right\rangle\}_{M\in I}$ est libre comme
 $\mathbb{Z}-$module dans
 $\left\langle \mathbf{B'} \right\rangle$.  Le fait que $\{\left\langle
 M\right\rangle\}_{M\in I}$ est une base du  $\mathbb{Z}-$module
  $\left\langle \mathbf{B}\right\rangle$  permet de conclure que
 $\mathfrak{X}:\left\langle \mathbf{B}\right\rangle \rightarrow
 \left\langle
 \mathbf{B'}\right\rangle$
est injectif. 
 
 La surjectivité de $\mathfrak{X}$ se déduit du lemme suivant :
 \begin{lem}\label{co}Le foncteur
$\mathfrak{X}: \mathbf{B} \rightarrow \mathbf{B'}$
est essentiellement surjectif. 
\end{lem}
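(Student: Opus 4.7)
\noindent\textbf{Plan de d\'emonstration du lemme \ref{co}.} L'id\'ee sera d'utiliser le th\'eor\`eme de Krull-Schmidt dans $\mathbf{B}$ et dans $\mathbf{B'}$, combin\'e \`a la bijection entre ind\'ecomposables fournie par le th\'eor\`eme \ref{2} et le lemme \ref{esto}.

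Soit $N\in \mathbf{B'}$ ; nous voulons montrer qu'il existe $M_0\in \mathbf{B}$ tel que $N\simeq \mathfrak{X}(M_0)$. Par d\'efinition de $\mathbf{B'}$, il existe une somme directe finie d'objets de la forme $\theta'_{s_1}\cdots \theta'_{s_k}(d)$ dont $N$ est un facteur direct. Puisque l'isomorphisme (\ref{ecu}) et le lemme \ref{pasa} donnent $\theta'_{s_1}\cdots \theta'_{s_k}(d)\simeq \mathfrak{X}(\theta_{s_1}\cdots \theta_{s_k}(d))$ dans $\mathbf{B'}$, l'additivit\'e de $\mathfrak{X}$ nous ram\`ene au cas o\`u $N$ est facteur direct de $\mathfrak{X}(M)$ pour un certain $M\in \mathbf{B}$.

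Nous d\'ecomposerons alors $M\simeq \bigoplus_i M_i^{n_i}$ selon Krull-Schmidt, avec $M_i\in I$ deux-\`a-deux non isomorphes (ce qui est licite car, comme dans la preuve du th\'eor\`eme \ref{2}, $\mathrm{End}_{\mathbf{C}}(M)_0$ est une $\mathbb{C}$-alg\`ebre de dimension finie). Nous aurons alors $\mathfrak{X}(M)\simeq \bigoplus_i \mathfrak{X}(M_i)^{n_i}$, o\`u, par le th\'eor\`eme \ref{2}, chaque $\mathfrak{X}(M_i)$ est ind\'ecomposable, et o\`u, par le lemme \ref{esto}, les $\mathfrak{X}(M_i)$ sont deux-\`a-deux non isomorphes. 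Comme le th\'eor\`eme \ref{1} implique que $\mathrm{End}_{\mathbf{C'}}(\mathfrak{X}(M))_0$ est un quotient de $\mathrm{End}_{\mathbf{C}}(M)_0$, donc aussi de dimension finie sur $\mathbb{C}$, le th\'eor\`eme de Krull-Schmidt vaut \'egalement dans $\mathbf{B'}$. Nous l'appliquerons \`a $\mathfrak{X}(M)\simeq N\oplus N'$, o\`u $N'$ est un compl\'ement de $N$, pour obtenir $N\simeq \bigoplus_i \mathfrak{X}(M_i)^{m_i}$ avec $0\leq m_i\leq n_i$, d'o\`u $N\simeq \mathfrak{X}\bigl(\bigoplus_i M_i^{m_i}\bigr)$ et l'on peut prendre $M_0=\bigoplus_i M_i^{m_i}$.

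Le point potentiellement d\'elicat est le recours au th\'eor\`eme de Krull-Schmidt dans $\mathbf{B'}$ ; celui-ci d\'ecoulera cependant du th\'eor\`eme \ref{1} et des arguments d\'ej\`a utilis\'es dans la preuve du th\'eor\`eme \ref{2}. Le reste n'est qu'assemblage : la bijection entre classes d'isomorphismes d'ind\'ecomposables d'un c\^ot\'e et de l'autre, combin\'ee \`a la stabilit\'e par facteurs directs, suffit pour conclure.
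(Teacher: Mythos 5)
Votre d\'emonstration est correcte, mais elle emprunte un chemin sensiblement diff\'erent de celui du papier. Vous d\'ecomposez d'abord $M$ en ind\'ecomposables par Krull--Schmidt dans $\mathbf{B}$, poussez la d\'ecomposition par $\mathfrak{X}$, invoquez le th\'eor\`eme~\ref{2} pour l'ind\'ecomposabilit\'e des $\mathfrak{X}(M_i)$ et le lemme~\ref{esto} pour leur non-isomorphie deux-\`a-deux, puis appliquez Krull--Schmidt dans $\mathbf{B'}$ pour identifier $N$ \`a une sous-somme et remonter \`a $M_0$. Le papier est plus \'economique : il part d'un ind\'ecomposable $\gamma'$ de $\mathbf{B'}$, facteur direct d'un $X'=\theta'_{s_1}\cdots\theta'_{s_p}(k)$, et rel\`eve directement l'idempotent de $E'_0=\mathrm{End}_{\mathbf{C'}}(X')_0$ d\'efinissant $\gamma'$ en un idempotent de $E_0=\mathrm{End}_{\mathbf{C}}(X)_0$ au moyen de la surjection $E_0\twoheadrightarrow E'_0$ donn\'ee par le th\'eor\`eme~\ref{1} et de la dimension finie de $E_0$ sur $\mathbb{C}$ ; le facteur direct $eX$ de $X$ ainsi obtenu s'envoie par $\mathfrak{X}$ sur $\gamma'$, sans aucun recours au th\'eor\`eme~\ref{2} ni au lemme~\ref{esto}. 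Les deux arguments reposent au fond sur le m\^eme m\'ecanisme (surjectivit\'e $E_0\to E'_0$ et rel\`evement d'idempotents, la compl\'etude idempotente de $\mathbf{B'}$ \'etant sous-jacente aux deux, que ce soit pour Krull--Schmidt ou pour la forme exacte de l'\'enonc\'e ``$\gamma'$ facteur direct d'un seul $X'$''), mais la version du papier extrait le c\oe ur de l'argument et \'evite de s'appuyer sur des r\'esultats ant\'erieurs plus lourds ; la v\^otre, l\'eg\`erement plus longue, a l'avantage de traiter d'embl\'ee un $N$ quelconque plut\^ot qu'un ind\'ecomposable.
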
 
\begin{proof}
 Chaque objet indécomposable $\gamma'$ de $\mathbf{B'}$ est un facteur direct d'un objet
 $X'=\theta'_{s_1}\cdots \theta'_{s_p}(k)$. Soit $X=\theta_{s_1}\cdots \theta_{s_p}(k)$.  Soit $ E= \mathrm{End}_{\mathbf{C}}(X)$, et
 $E_0$ sa partie gradué de degré zéro. Soit $ E'=
 \mathrm{End}_{\mathbf{C'}}(X')$, et $E'_0$ sa partie
 gradué de degré zéro. Comme
 $E_0$ est une $\mathbb{C}-$algèbre de dimension finie, et le
 morphisme $E_0\rightarrow E'_0$ est surjectif comme conséquence du
 théorème
 \ref{1}, alors tout idempotent de $E'_0$ peut se relever à un
 idempotent de $E_0$. En particulier l'idempotent définissant $\gamma'$, ce
 qui
 permet de compléter les preuves du lemme \ref{co} et du théorème \ref{ult}. 
 \end{proof}

\end{document}